\newtheorem{theorem}{Theorem}[section]
\newtheorem{lemma}[theorem]{Lemma}
\theoremstyle{definition}
\newtheorem{corollary}{Corollary}[theorem]
\theoremstyle{remark}
\numberwithin{equation}{section}
\begin{document}

\title{Gaps of powers of consecutive primes and some consequences }

%    Information for first author
\author{Douglas Azevedo}

\author{Tiago Reis}

%    Address of record for the research reported here
\address{UTFPR-CP. Av. Alberto Carazzai 1640, centro, caixa postal 238,  86300000, Cornelio Procopio, PR, Brasil.}
%    Current address
%\curraddr{Department of Mathematics and Statistics,
%Case Western Reserve University, Cleveland, Ohio 43403}
\email{dgs.nvn@gmail.com}
%    \thanks will become a 1st page footnote.
%\thanks{The first author was supported in part by NSF Grant \#000000.}

%    Information for second author
%\author{Author Two}
%\address{Mathematical Research Section, School of Mathematical Sciences,
%Australian National University, Canberra ACT 2601, Australia}
%\email{two@maths.univ.edu.au}
%\thanks{Support information for the second author.}

%    General info
\subjclass[2010]{Primary  11A41}

%\date{January 1, 2001 and, in revised form, June 22, 2001.}

%\dedicatory{This paper is dedicated to our advisors.}

\keywords{ prime numbers, powers of prime numbers, gaps, Kummer's test}
%}

\begin{abstract}
Let $\{q_n\}$ be a sequence of positive numbers and  $x\in\mathbb{R}$. In this note we prove that the inequality
$$q_n p_{n+1}^{x}-q_{n+1}p_{n}^{x}<p_{n}^{x}p_{n+1}^{x-1}, $$
holds for infinitely many values of $n$.  
As it is shown, the key ingredient to obtain this behaviour  is a consequence  of an extension of the Kummer's characterization of convergent series of positive terms.
\end{abstract}

\maketitle

\section{Background and main result}
\label{sec1}

 The behaviour of the prime numbers is one of the most interesting issues in mathematics and  many great mathematicians have been working on this subject, for instance, we indicate \cite{gold,gold1,maynard,polymath8,zhang}, and references therein. In particular, the investigation of gaps between consecutive prime numbers, that is, the behaviour of the sequence $\{g_n\}$, defined as  $g_n = p_{n+1} - p_n$, for all positive integer $n$, in which, $p_n$ denotes the $n$th prime number, is among one of the most important unsolved problems in number theory. 
 
 In this note we present some information about the sequence $\{p_{n+1}^{x}-p_{n}^{x}\}$, in which $x$ is a positive real number. Our main result is as follows
 
\begin{theorem}\label{main}
Let $x\in\mathbb{R}$.
For any sequence of positive terms $\{q_n\}$, 
%any positive integer
%$N$ there exists a $n>N$ such that
the inequality
$$q_n p_{n+1}^{x}-q_{n+1}p_{n}^{x}<\frac{p_{n}^{x}}{p_{n+1}^{1-x}} $$
holds for infinitely many values of $n$.
\end{theorem}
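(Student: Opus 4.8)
The plan is to recognize the asserted inequality as a disguised instance of Kummer's test and then exploit the divergence of $\sum 1/p_n$. Writing the right-hand side as $p_n^x/p_{n+1}^{1-x}=p_n^x p_{n+1}^{x-1}$ and using $q_n,p_n>0$, we may divide $q_n p_{n+1}^{x}-q_{n+1}p_n^{x}<p_n^x p_{n+1}^{x-1}$ by the positive quantity $p_n^x p_{n+1}^{x-1}$; the inequality then becomes equivalent to
\[
q_n p_n^{-x} p_{n+1}-q_{n+1}p_{n+1}^{1-x}<1 .
\]
Now set $a_n:=1/p_n$ and $b_n:=q_n p_n^{1-x}$, which are sequences of positive reals for every real $x$. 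Since $a_n/a_{n+1}=p_{n+1}/p_n$, the left-hand side above is exactly the Kummer quantity $K_n:=b_n\,a_n/a_{n+1}-b_{n+1}$ attached to the positive series $\sum a_n$ with comparison sequence $\{b_n\}$.

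The second step applies the consequence of the (extended) Kummer characterization: if $\{a_n\}$ and $\{b_n\}$ are positive and $\liminf_n K_n>0$, then $\sum a_n$ converges. (This is the convergence half of Kummer's test: if $K_n\ge c>0$ for all $n\ge N$, then $b_n a_n-b_{n+1}a_{n+1}\ge c\,a_{n+1}$, and telescoping forces the partial sums of $\sum a_n$ to stay bounded.) Contrapositively, if $\sum a_n$ diverges then $\liminf_n K_n\le 0$, and in particular $K_n<1$ for infinitely many $n$. With our choice $a_n=1/p_n$ the series $\sum a_n$ is the sum of reciprocals of the primes, which diverges by Euler's classical theorem; hence $K_n<1$ for infinitely many $n$, which on unwinding the substitution is precisely the conclusion of Theorem~\ref{main}.

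The only genuinely delicate point is the first step: guessing the substitution $a_n=1/p_n$, $b_n=q_n p_n^{1-x}$ so that the rearranged inequality matches $K_n$ exactly, and checking that both auxiliary sequences are positive for every real $x$ (immediate from $q_n,p_n>0$). Everything afterwards is a one-line combination of Kummer's test with the classical divergence of $\sum 1/p_n$; in particular, no hypothesis on the size or regularity of $\{q_n\}$ is needed, because Kummer's comparison sequence may be chosen completely arbitrarily among positive sequences.
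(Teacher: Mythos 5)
Your proof is correct, and it reaches the conclusion of Theorem~\ref{main} by a slightly but genuinely different route. The paper first factors the divergent series as $\sum 1/p_n=\sum a_nb_n$ with $a_n=1/p_n^{x}$ and $b_n=p_n^{x-1}$, and then invokes its two-sequence extension of Kummer's test (Lemma~\ref{Lim} and its contrapositive, Lemma~\ref{contrap}), in which the arbitrary sequence $\{q_n\}$ enters directly as the Kummer comparison sequence. You instead keep $a_n=1/p_n$ untouched and absorb both $\{q_n\}$ and the power of $p_n$ into the comparison sequence $b_n=q_np_n^{1-x}$, so that only the classical one-series Kummer convergence criterion is needed: since $\sum 1/p_n$ diverges, the quantity $K_n=b_n a_n/a_{n+1}-b_{n+1}$ cannot stay bounded below by a positive constant, hence $K_n<1$ for infinitely many $n$, which after clearing the factor $p_n^{x}p_{n+1}^{x-1}>0$ is exactly the stated inequality. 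Your algebra checks out, and the quantifier handling is right: because the comparison sequence in Kummer's test may be any positive sequence, the conclusion holds for every positive $\{q_n\}$, with no growth or regularity assumptions. What your version buys is economy --- it shows that the paper's extended Lemma~\ref{Lim} is not actually needed for the main theorem, since the classical test suffices once the powers of $p_n$ are pushed into the comparison sequence. What the paper's version buys is uniformity of presentation: its two-sequence lemma is stated once and then applied verbatim, with the bound $p_{n+1}^{x-1}$ appearing on the right-hand side without any preliminary rearrangement of the inequality.
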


It is clear that Theorem \ref{main} provides a general inequality involving powers of  prime numbers and by exploring this inequality, for suitable choices of $x\in\mathbb{R}$ we are able to obtain interesting information about the sequence $\{p_{n+1}^{x}-p_{n}^{x}\}$. Note also that this result is related to the main result of \cite{azevedo1}, which deals with the case $x=1$. 

The method to prove Theorem \ref{main} depends on an extension of  the Kummer's test for convergence of series of positive terms. This  test is actually a   theoretical characterization of convergent  series of positive terms, that is, it  provides necessary and sufficient conditions that ensures convergence of series of positive terms. For more information about the original Kummer's test we refer to \cite{Tong:2004}.  

Another important result that plays a fundamental role in the proof of Theorem \ref{main} is the well known  divergence of the series of the reciprocal of the primes numbers. Roughly speaking,  the  idea behind the proof of Theorem \ref{main} is to combine the divergence of this series with  the contrapositive argument of an  extension of the Kummer's test.

Let us now present the results that will be needed to prove our main result and then extract some interesting information about  gaps of powers of prime numbers.

%Let us start with the following consequences of the Prime Number Theorem
% (see \cite[p. 80]{apostol}). 

%\begin{lemma}\label{PNT}
%$$(i)\,\,\lim_{n\to\infty}\frac{p_n}{n\log(n)}=1,$$
%and, consequently,
%$$(ii)\,\,\lim_{n\to\infty}\frac{\log(p_n)}{\log(n)}=1.$$

%\end{lemma}
%\begin{proof}
%First equality is a consequence of  the Prime Number Theorem.
%The second equality is an imediate consequence of 
%$$\lim_{n\to\infty} \frac{\log(n)}{  \log(n\log(n))}=1. $$

%\end{proof}

We start by presenting an extension of the Kummer's test which characterizes the convergence of series of positive terms.

\begin{lemma}\label{Lim}
Let  $\{a_n\}$ and $\{b_n\}$ are sequences of positive terms.
The series $\sum a_n b_n$ converges if and only if there exist a sequence $\{q_n\}$ of positive terms and a positive integer $N$ such that
$$q_n\frac{a_n}{a_{n+1}}-q_{n+1}\geq b_{n+1}, $$
for $n>N$.
\end{lemma}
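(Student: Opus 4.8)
The plan is to prove the two implications separately; each reduces to a one-line manipulation.

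\emph{Sufficiency.} Assume there exist a sequence $\{q_n\}$ of positive terms and an integer $N$ with $q_n a_n/a_{n+1} - q_{n+1} \geq b_{n+1}$ for $n > N$. Multiplying through by $a_{n+1} > 0$ gives $q_n a_n - q_{n+1} a_{n+1} \geq a_{n+1} b_{n+1}$. Summing this inequality over $n = N+1, \dots, M$, the left-hand side telescopes, so that
$$\sum_{n=N+2}^{M+1} a_n b_n \;\leq\; q_{N+1} a_{N+1} - q_{M+1} a_{M+1} \;\leq\; q_{N+1} a_{N+1}.$$
Thus the partial sums of $\sum a_n b_n$ are bounded above by a constant independent of $M$; since the terms are positive, the series converges.

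\emph{Necessity.} Assume $\sum a_n b_n$ converges and let $R_n = \sum_{k=n+1}^{\infty} a_k b_k$ denote its tail from the index $n+1$ onward. Because every term $a_k b_k$ is strictly positive, each $R_n$ is a well-defined positive real number, so $q_n := R_n / a_n$ defines a sequence of positive terms. Since $R_n - R_{n+1} = a_{n+1} b_{n+1}$, a direct computation yields
$$q_n \frac{a_n}{a_{n+1}} - q_{n+1} = \frac{R_n}{a_{n+1}} - \frac{R_{n+1}}{a_{n+1}} = \frac{a_{n+1} b_{n+1}}{a_{n+1}} = b_{n+1}$$
for every $n$, so the asserted inequality holds (with equality), and any positive integer may be taken for $N$.

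I do not expect a genuine obstacle here: the only delicate point is in the necessity direction, where the strict positivity of the tails $R_n$, and hence of the candidate sequence $\{q_n\}$, relies on the hypothesis that the terms are positive rather than merely nonnegative. Everything else is elementary telescoping.
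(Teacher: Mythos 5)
Your proof is correct and follows essentially the same route as the paper's: the sufficiency direction telescopes the inequality $q_n a_n - q_{n+1}a_{n+1} \geq a_{n+1}b_{n+1}$ to bound the partial sums, and the necessity direction defines $q_n$ as the tail $\sum_{k>n} a_k b_k$ divided by $a_n$, which is exactly the paper's choice $q_n = (S - \sum_{i\leq n} a_i b_i)/a_n$. Your remark about needing strict positivity of the terms to guarantee $q_n > 0$ is a valid observation that the paper glosses over.
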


\begin{proof}
 Let us show that $\sum_{n=1}^{\infty}a_{n}b_{n}$ converges. For this, note that the condition 
$$q_{n}\frac{a_{n}}{a_{n+1}}-p_{n+1}\geq b_{n+1},\,\,n\geq N $$
implies that 
\begin{equation}\label{auxx}
a_{n}q_{n}\geq a_{n+1}(q_{n+1}+ b_{n+1}),\,\,n\geq N.
\end{equation}

Hence, for any fixed $k \geq 0$ we have that
\begin{eqnarray*}
%a_Nq_N &\geq & a_{N+1}q_{N+1}+a_{N+1}b_{N+1}, \\
%a_{N+1}q_{N+1} &\geq & a_{N+2}q_{N+2}+a_{N+2}b_{N+2}, \\
%&\vdots & \\
a_{N+k}q_{N+k} &\geq & a_{N+k+1}q_{N+k+1}+a_{N+k+1}b_{N+k+1},
\end{eqnarray*}
as so, 
\begin{eqnarray*}
a_{N}q_{N} +\sum_{i=1}^k a_{N+i}q_{N+i} & \geq& \sum_{i=1}^{k+1}a_{N+i}b_{N+i}+\sum_{i=1}^k a_{N+i}q_{N+i}  +a_{N+k+1}q_{N+k+1}.
\end{eqnarray*}
Therefore we obtain 
$$a_Nq_N\geq  \sum_{i=1}^{k+1}a_{N+i}b_{N+i}  +a_{N+k+1}q_{N+k+1} \geq \sum_{i=1}^{k+1}a_{N+i}b_{N+i}>0, $$
for all integer $k\geq 0$. This implies the convergence of $\sum_{n=1}^{\infty}a_{n}b_{n}$.

Conversely, suppose that 
$S:=\sum_{n=1}^{\infty}a_{n}b_{n}$ and define 
\begin{equation}\label{pn}
q_{n}=\,\frac{S-\sum_{i=1}^{n}a_{i}b_{i}}{a_{n}},\,\,n\geq N.
\end{equation} 
 For this $\{q_{n}\}$, clearly $q_{n}>0$ for all $n\geq 1$ and  it is easy to check that
\begin{eqnarray*}
q_{n}\frac{a_{n}}{a_{n+1}}-q_{n+1}= b_{n+1}, \,\,n\geq 1.
\end{eqnarray*}

\end{proof}

As we have mentioned before, the key argument in the proof of Theorem \ref{main} is the
 contrapositive of Lemma \ref{Lim}, which is presented below. 

\begin{lemma}\label{contrap}
Let  $\{a_n\}$ and $\{b_n\}$ are sequences of positive terms.
The series $\sum a_n b_n$ diverges if and only if for each $\{q_n\}$ of positive terms we have that: for all  positive integer $N$ there exist $n>N$ such that
$$q_n\frac{a_n}{a_{n+1}}-q_{n+1}< b_{n+1}. $$

\end{lemma}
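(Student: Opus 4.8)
The plan is to observe that Lemma \ref{contrap} is obtained by negating both sides of the biconditional established in Lemma \ref{Lim}; since a biconditional $A\Leftrightarrow B$ is logically equivalent to $\neg A\Leftrightarrow \neg B$, the lemma follows once the two negations are identified correctly. To this end, write $A$ for the assertion that $\sum a_n b_n$ converges, and $B$ for the assertion that there exist a positive sequence $\{q_n\}$ and a positive integer $N$ such that
$$q_n\frac{a_n}{a_{n+1}}-q_{n+1}\geq b_{n+1}\quad\text{for all }n>N.$$
Lemma \ref{Lim} asserts precisely that $A\Leftrightarrow B$.

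For the negation of $A$, I would use that a series of positive terms has monotone increasing partial sums, so it either converges or diverges to $+\infty$; hence $\neg A$ is exactly the statement that $\sum a_n b_n$ diverges. For the negation of $B$, I would push the negation through the quantifier prefix in the standard way: the two existential quantifiers $\exists\,\{q_n\}$ and $\exists\,N$ become universal, the universal quantifier $\forall\,n>N$ becomes existential, and the non-strict inequality $\geq$ is replaced by its negation $<$. This produces the statement that for every positive sequence $\{q_n\}$ and every positive integer $N$ there exists $n>N$ with
$$q_n\frac{a_n}{a_{n+1}}-q_{n+1}< b_{n+1},$$
which is verbatim the condition appearing in Lemma \ref{contrap}.

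Combining these identifications, the equivalence $\neg A\Leftrightarrow\neg B$ is exactly the assertion of Lemma \ref{contrap}, which completes the argument. The only point requiring care — and the single place where an error could enter — is the mechanical negation of the quantifier block defining $B$: one must respect the scope of each quantifier so that the prefix $\exists\,\{q_n\}\,\exists\,N\,\forall\,n$ negates to $\forall\,\{q_n\}\,\forall\,N\,\exists\,n$, and one must turn the boundary case $\geq$ into the strict $<$ rather than into $\leq$. Beyond this bookkeeping there is no further mathematical content, since all the analytic work is already contained in the proof of Lemma \ref{Lim}.
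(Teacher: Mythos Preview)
Your argument is correct and matches the paper's approach: the paper simply states that Lemma~\ref{contrap} is the contrapositive of Lemma~\ref{Lim} and does not supply any further proof. Your careful unwinding of the quantifier negations is exactly what is needed to justify that one-line assertion.
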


\section{Proof Theorem \ref{main} and some consequences}

In this section we prove Theorem \ref{main} and present some interesting 
consequences.

\begin{proof}[Proof of Theorem \ref{main}] It is clear that
\begin{eqnarray}
\sum_{n=1}^{\infty}\frac{1}{p_n}&=&\sum_{n=1}^{\infty}\frac{1}{p_n^{x}}p_{n}^{x-1}
\end{eqnarray}
holds for any real number $x$. Since $\sum \frac{1}{p_n}$ diverges, a direct application of  Lemma \ref{contrap} with $a_n=\frac{1}{p_{n}^{x}}$ and $b_n=p_{n}^{x-1}$ give us that, for all $\{q_n\}$ of positive terms, we have that for all positive integer $N$ there exists $n>N$ such that 
$$q_{n}\frac{1/p_{n}^{x}}{1/p_{n+1}^{x}}-q_{n+1}<p_{n+1}^{x-1}.$$
That is, for every sequence $\{q_n\}$ of positive numbers, the inequality
$$q_n p_{n+1}^{x}-q_{n+1}p_{n}^{x}<\frac{p_{n}^{x}}{p_{n+1}^{1-x}}, $$
holds for infinitely many values of $n$.
\end{proof}

Let us now extract some consequences 
 of Theorem \ref{main}.

\begin{corollary}\label{coroaux} Let $x$ be a real number and $\{q_n\}$ any sequence of positive numbers.  The inequality
$$p_{n+1}^{x}-p_{n}^{x}<\frac{(p_n p_{n+1})^{x}}{q_{n} p_{n+1}}+p_n^{x}\frac{q_{n+1}-q_{n}}{q_{n}}. $$
holds for infinitely many values of $n$.
\end{corollary}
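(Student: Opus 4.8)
The plan is to derive Corollary \ref{coroaux} directly from Theorem \ref{main} by pure algebraic manipulation; no new analytic input is needed. Theorem \ref{main} tells us that for the given sequence $\{q_n\}$ of positive numbers, the inequality
$$q_n p_{n+1}^{x}-q_{n+1}p_{n}^{x}<\frac{p_{n}^{x}}{p_{n+1}^{1-x}}$$
holds for infinitely many $n$. The goal inequality involves $p_{n+1}^{x}-p_{n}^{x}$ with coefficient $1$ on both terms, so the natural move is to divide through by $q_n$, which is legitimate since $q_n>0$.

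First I would divide both sides of the Theorem \ref{main} inequality by $q_n$, obtaining
$$p_{n+1}^{x}-\frac{q_{n+1}}{q_n}p_{n}^{x}<\frac{p_{n}^{x}p_{n+1}^{x-1}}{q_n}=\frac{(p_np_{n+1})^{x}}{q_n p_{n+1}},$$
where I have rewritten $p_n^{x}p_{n+1}^{x-1}=(p_np_{n+1})^{x}/p_{n+1}$. Next I would move the term $-\tfrac{q_{n+1}}{q_n}p_n^{x}$ to the right-hand side and add and subtract $p_n^{x}$ so as to produce $p_{n+1}^{x}-p_n^{x}$ on the left: writing $p_{n+1}^{x}-p_n^{x}=\bigl(p_{n+1}^{x}-\tfrac{q_{n+1}}{q_n}p_n^{x}\bigr)+\bigl(\tfrac{q_{n+1}}{q_n}-1\bigr)p_n^{x}$, and the latter parenthesis equals $\tfrac{q_{n+1}-q_n}{q_n}$. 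Substituting the bound from the displayed inequality then yields exactly
$$p_{n+1}^{x}-p_{n}^{x}<\frac{(p_n p_{n+1})^{x}}{q_{n} p_{n+1}}+p_n^{x}\frac{q_{n+1}-q_{n}}{q_{n}},$$
valid for the same infinitely many $n$ supplied by Theorem \ref{main}.

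There is essentially no obstacle here; the only things to be careful about are that all divisions are by the strictly positive quantities $q_n$ (and powers of primes), so no inequality flips, and that the "infinitely many $n$" quantifier is preserved verbatim since we only manipulate the inequality at each such $n$ individually. The one mildly delicate point, if one wanted to be scrupulous, is that $p_n^{x}$ and $p_{n+1}^{x}$ are positive for every real $x$, so the identity $p_n^{x}p_{n+1}^{x-1}=(p_np_{n+1})^{x}/p_{n+1}$ and all the rearrangements are valid without any sign restriction on $x$.
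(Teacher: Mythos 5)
Your proposal is correct and follows essentially the same route as the paper: both derive the corollary from Theorem \ref{main} by the elementary rearrangement $p_{n+1}^{x}-p_n^{x}=\bigl(p_{n+1}^{x}-\tfrac{q_{n+1}}{q_n}p_n^{x}\bigr)+\tfrac{q_{n+1}-q_n}{q_n}p_n^{x}$ together with division by $q_n>0$ (the paper merely adds and subtracts $q_{n+1}p_n^{x}$ before dividing, rather than after). No gap; the quantifier over infinitely many $n$ is preserved exactly as you say.
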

\begin{proof}
Note that
$$q_n( p_{n+1}^{x}-p_{n}^{x}) =q_np_{n+1}^{x}-q_np_{n}^{x}+q_{n+1}p_{n}^{x}-q_{n+1}p_{n}^{x} <\frac{p_{n}^{x}}{p_{n+1}^{1-x}}-q_np_n^x+q_{n+1}p_n^x. $$

This implies that

$$p_{n+1}^{x}-p_{n}^{x}<\frac{(p_n p_{n+1})^{x}}{q_{n} p_{n+1}}+p_n^{x}\frac{q_{n+1}-q_{n}}{q_{n}}. $$
\end{proof}

The next result present a general bound for gaps of prime numbers which was obtained previously in \cite{azevedo1}.

\begin{corollary}\label{coroaux1}  Let $\{q_n\}$ be a sequence of positive numbers. The inequality
$$p_{n+1}-p_{n}<p_n\frac{q_{n+1}-q_{n}+1}{q_{n}}, $$
holds for infinitely many values of $n$.
\end{corollary}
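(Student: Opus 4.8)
The plan is to obtain Corollary \ref{coroaux1} as the special case $x = 1$ of Corollary \ref{coroaux}. First I would set $x = 1$ in the inequality of Corollary \ref{coroaux}, so that $(p_n p_{n+1})^x = p_n p_{n+1}$ and $p_n^x = p_n$. The right-hand side then becomes
$$\frac{p_n p_{n+1}}{q_n p_{n+1}} + p_n \frac{q_{n+1} - q_n}{q_n} = \frac{p_n}{q_n} + p_n \frac{q_{n+1} - q_n}{q_n} = p_n \frac{q_{n+1} - q_n + 1}{q_n},$$
which is exactly the claimed bound. Since Corollary \ref{coroaux} guarantees its inequality for infinitely many $n$ for any real $x$ and any positive sequence $\{q_n\}$, applying it with $x = 1$ immediately yields that $p_{n+1} - p_n < p_n \frac{q_{n+1} - q_n + 1}{q_n}$ for infinitely many $n$.

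Alternatively, one can argue directly from Theorem \ref{main} with $x = 1$: the theorem gives $q_n p_{n+1} - q_{n+1} p_n < \frac{p_n}{p_{n+1}^{0}} = p_n$ for infinitely many $n$, and then the same algebraic manipulation used in the proof of Corollary \ref{coroaux} (writing $q_n(p_{n+1} - p_n) = q_n p_{n+1} - q_{n+1} p_n + (q_{n+1} - q_n) p_n$ and dividing by $q_n > 0$) produces the stated inequality.

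There is essentially no obstacle here: the only thing to be careful about is the bookkeeping of the exponents when substituting $x = 1$ — in particular that $p_{n+1}^{1-x} = p_{n+1}^0 = 1$ — and the observation that dividing by $q_n$ preserves the inequality because the $q_n$ are positive. Since "infinitely many $n$" is inherited verbatim from the parent statement, no additional argument about the index set is needed. I would present the proof in one or two lines, simply citing Corollary \ref{coroaux} (or Theorem \ref{main}) with $x = 1$ and recording the simplification of the right-hand side.
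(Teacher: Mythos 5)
Your proposal is correct and matches the paper's approach: the paper proves this by setting $x=1$ in Theorem \ref{main} (which is equivalent to your route through Corollary \ref{coroaux}, itself a rearrangement of the theorem). Your version actually spells out the algebra that the paper's one-line proof leaves implicit.
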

\begin{proof}
It is enough to take $x=1$ in Theorem \ref{main}.
\end{proof}

In particular, from the previous corollary we have the following behaviour.

\begin{corollary}\label{coroaux2}  If  $x>0$ then 
$$\liminf_{n\to\infty}\frac{p_{n+1}-p_{n}}{p_{n}^{x}}=0 .$$

\end{corollary}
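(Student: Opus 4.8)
The plan is to deduce Corollary \ref{coroaux2} from Corollary \ref{coroaux1} by making a clever choice of the free sequence $\{q_n\}$. Fix $x>0$. Since $p_{n+1}-p_n>0$ and $p_n^x>0$, the liminf in question is nonnegative, so it suffices to exhibit, for every $\varepsilon>0$, infinitely many $n$ with $(p_{n+1}-p_n)/p_n^x<\varepsilon$. By Corollary \ref{coroaux1}, for any choice of positive $\{q_n\}$ there are infinitely many $n$ with
$$p_{n+1}-p_n<p_n\,\frac{q_{n+1}-q_n+1}{q_n},$$
hence, dividing by $p_n^x$,
$$\frac{p_{n+1}-p_n}{p_n^x}<p_n^{1-x}\,\frac{q_{n+1}-q_n+1}{q_n}$$
for infinitely many $n$. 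So the task reduces to choosing $\{q_n\}$ so that the right-hand side tends to $0$ (or at least has liminf $0$) along those $n$.

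The natural choice is to let $q_n$ grow fast enough that $p_n^{1-x}(q_{n+1}-q_n+1)/q_n\to 0$. First I would try $q_n = p_n^{\alpha}$ for a suitable $\alpha>0$; then $q_{n+1}-q_n+1 = p_{n+1}^{\alpha}-p_n^{\alpha}+1$ and $q_{n+1}/q_n = (p_{n+1}/p_n)^{\alpha}\to 1$, so the expression is comparable to $p_n^{1-x}\big((p_{n+1}^{\alpha}-p_n^{\alpha})/p_n^{\alpha}+p_n^{-\alpha}\big)$. The term $p_n^{1-x-\alpha}$ clearly tends to $0$ as soon as $\alpha>1-x$ (automatic if $x\ge 1$, and a mild constraint otherwise). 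The delicate term is $p_n^{1-x}(p_{n+1}^{\alpha}-p_n^{\alpha})/p_n^{\alpha}$; using $p_{n+1}^{\alpha}-p_n^{\alpha}\le \alpha\, p_{n+1}^{\alpha-1}(p_{n+1}-p_n)$ (mean value theorem, valid for $\alpha\ge 1$; for $0<\alpha<1$ one gets an analogous bound with $p_n^{\alpha-1}$) together with the crude classical bound $p_{n+1}-p_n = o(p_n)$ — indeed $p_{n+1}/p_n\to 1$ by the prime number theorem, or even by Bertrand's postulate one has $p_{n+1}-p_n<p_n$ — this term is $O(p_n^{1-x}\cdot p_n^{-1}\cdot p_n) = O(p_n^{1-x})$, which is not yet $o(1)$ when $x\le 1$.

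To handle all $x>0$ uniformly, rather than a single exponent I would instead pick $\{q_n\}$ adapted to the subsequence produced by Corollary \ref{coroaux1}, or simply take $q_n$ growing faster than any power, e.g. $q_n = 2^{n}$ or $q_n = n!$. With $q_n=2^n$ we get $q_{n+1}-q_n+1 = 2^n+1$ and $q_n=2^n$, so $(q_{n+1}-q_n+1)/q_n = 1 + 2^{-n}\to 1$, which does \emph{not} shrink — that choice fails. The point is that we want the \emph{ratio} $(q_{n+1}-q_n+1)/q_n$ small, i.e. $q_{n+1}\approx q_n$ with $q_n\to\infty$, which forces \emph{slow} growth, not fast. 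So the right family is $q_n = n^{\beta}$ or $q_n=(\log n)^{\beta}$: then $q_{n+1}-q_n = O(q_n/n)$ and $1/q_n\to 0$, giving $(q_{n+1}-q_n+1)/q_n = O(1/n)+o(1)\to 0$. With $q_n=n$, for instance, the bound becomes $(p_{n+1}-p_n)/p_n^x < p_n^{1-x}\cdot\frac{2}{n}$ along infinitely many $n$; since $p_n\sim n\log n$, the right side is $\sim 2 (n\log n)^{1-x}/n = 2 n^{-x}(\log n)^{1-x}\to 0$ for every $x>0$. Thus for infinitely many $n$ the ratio is below any prescribed $\varepsilon$, which is exactly $\liminf_{n\to\infty}(p_{n+1}-p_n)/p_n^x=0$.

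The main obstacle, then, is not any deep input but pinning down a single choice of $\{q_n\}$ that works simultaneously for all $x>0$ and verifying the asymptotics cleanly; the choice $q_n=n$ (combined with the elementary fact $p_n<2p_{n-1}$, or $p_n\to\infty$ with $p_n = O(n\log n)$) does the job, and the whole argument is then a one-line estimate. I would present it in that order: reduce to producing infinitely many $n$ with small ratio, invoke Corollary \ref{coroaux1} with $q_n=n$, and estimate $p_n^{1-x}\cdot(2/n)\to 0$.
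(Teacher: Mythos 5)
Your proposal is correct and, after the exploratory detours (the $q_n=p_n^{\alpha}$ and $q_n=2^n$ trials that you rightly discard), lands on exactly the paper's argument: apply Corollary \ref{coroaux1} with $q_n=n$, divide by $p_n^x$, and use $p_n\sim n\log n$ to see that $p_n^{1-x}\cdot(2/n)\to 0$ for every $x>0$. No gaps.
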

\begin{proof}
Let $x>0$ be fixed and take as $q_n=n$ in  Corollary \ref{coroaux1}.
From the Prime Number Theorem   we have that 
$\lim \frac{p_n}{n\log(n)}=\lim \frac{\log(p_n)}{\log(n)}=1$. Hence
$$ \frac{p_{n+1}-p_{n}}{p_n^{x}}<\frac{p_n}{n\log(n)}\frac{n\log(n)}{p_n^{x}}\frac{2}{n}=2\frac{p_n}{n\log(n)}\frac{\log(n)}{p_n^{x}} $$
holds for infinitely many values of $n$. As so
$$ \liminf_{n\to\infty}\frac{p_{n+1}-p_{n}}{p_n^{x}}\leq 2\liminf_{n\to\infty}\frac{p_n}{n\log(n)}\frac{\log(n)}{p_n^{x}} =0.$$

\end{proof}

The next result is related to the celebrated estimate obtained in \cite{gold} which states that
$$\liminf_{n\to\infty}\frac{p_{n+1}-p_{n}}{\log(p_{n})}=0 .$$

Although we prove a 
weaker version, note that we use quite more elementary methods.
 
\begin{corollary}\label{coroaux3}  If  $x>0$ then 
$$\liminf_{n\to\infty}\frac{p_{n+1}-p_{n}}{\log(p_{n})^{1+x}}=0 .$$

\end{corollary}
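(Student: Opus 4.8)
The plan is to mimic the proof of Corollary \ref{coroaux2}, replacing the choice $q_n = n$ by a sequence that grows fast enough to produce the extra logarithmic factor. Concretely, I would apply Corollary \ref{coroaux1} with $q_n = n\log(n+1)$ (the shift in the logarithm just avoids $\log 1 = 0$; any choice like $q_n=n\log n$ for $n\geq 2$ works equally well). For this sequence one computes $q_{n+1}-q_n = (n+1)\log(n+2) - n\log(n+1)$, which behaves like $\log n + O(1)$ as $n\to\infty$; hence $q_{n+1}-q_n+1 = O(\log n)$, and more precisely $q_{n+1}-q_n+1 \leq C\log n$ for some absolute constant $C$ and all large $n$. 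Plugging into Corollary \ref{coroaux1} gives, for infinitely many $n$,
$$p_{n+1}-p_n < p_n\,\frac{q_{n+1}-q_n+1}{q_n} \leq p_n\,\frac{C\log n}{n\log(n+1)} \leq \frac{C\,p_n}{n}.$$

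Next I would divide by $\log(p_n)^{1+x}$ and invoke the Prime Number Theorem in the forms $\lim_n \frac{p_n}{n\log n} = 1$ and $\lim_n \frac{\log p_n}{\log n} = 1$, exactly as in the proof of Corollary \ref{coroaux2}. Writing
$$\frac{p_{n+1}-p_n}{\log(p_n)^{1+x}} \leq \frac{C}{n}\cdot\frac{p_n}{\log(p_n)^{1+x}} = C\,\frac{p_n}{n\log n}\cdot\frac{\log n}{\log(p_n)^{1+x}} = C\,\frac{p_n}{n\log n}\cdot\left(\frac{\log n}{\log p_n}\right)^{1+x}\cdot\frac{1}{(\log n)^{x}},$$
the first factor tends to $1$, the middle factor tends to $1$, and the last factor tends to $0$ because $x>0$. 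Therefore the right-hand side tends to $0$ along a subsequence, which forces $\liminf_{n\to\infty}\frac{p_{n+1}-p_n}{\log(p_n)^{1+x}} = 0$ (the quantity is manifestly nonnegative, so the liminf is both $\leq 0$ and $\geq 0$).

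The only genuinely non-routine point is the asymptotic estimate $q_{n+1}-q_n = \log n + O(1)$ for $q_n = n\log(n+1)$; this is an elementary calculation (for instance via $(n+1)\log(n+2)-n\log(n+1) = \log(n+2) + n\log\frac{n+2}{n+1}$ and $n\log\frac{n+2}{n+1}\to 1$), but it is where one must be slightly careful to get an honest absolute bound $q_{n+1}-q_n+1\leq C\log n$ valid for all large $n$. Everything else is a direct transcription of the argument already used for Corollary \ref{coroaux2}, so I expect no real obstacle. One could alternatively phrase the whole thing without naming the constant $C$ by just writing $q_{n+1}-q_n+1 = O(\log n)$ and tracking the $O$ through the inequalities, but keeping an explicit constant makes the final liminf comparison cleanest.
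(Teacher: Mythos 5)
Your proof is correct and follows essentially the same route as the paper: apply Corollary \ref{coroaux1} to get $p_{n+1}-p_n = O(p_n/n)$ for infinitely many $n$, then divide by $\log(p_n)^{1+x}$ and invoke the Prime Number Theorem. The only difference is that the paper simply takes $q_n=n$ (which already yields $p_{n+1}-p_n<2p_n/n$), so the extra logarithm in your choice $q_n=n\log(n+1)$ is harmless but unnecessary --- the required gain of $(\log n)^{x}$ comes from the denominator $\log(p_n)^{1+x}$ being of order $(\log n)^{1+x}$, not from the choice of $q_n$.
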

\begin{proof}
Let $x>0$ be fixed and again take as $q_n=n$ in  Corollary \ref{coroaux1}.
By the same arguments used in the previous proof we have that
$$\frac{p_{n+1}-p_{n}}{\log(p_n)^{1+x}}<\frac{p_n}{n\log(n)}\frac{n}{\log(n)^{x}}\frac{2}{n}, $$
holds for infinitely many values of $n$.
This implies that
$$\liminf_{n\to\infty}\frac{p_{n+1}-p_{n}}{\log(p_n)^{1+x}}\leq 2\liminf_{n\to\infty}\frac{p_n}{n\log(n)}\frac{1}{\log(n)^{x}}=0. $$

\end{proof}

We close the paper with a result related to the Andrica's conjecture.

\begin{corollary} \label{alpha}  If  $0 \leq x < 1 $ then  
\begin{equation}\liminf_{n\to \infty}p_{n+1}^{x}-p_{n}^{x}=0.
\label{eqmain}\end{equation}
\end{corollary}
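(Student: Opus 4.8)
\textbf{Proof proposal for Corollary \ref{alpha}.}

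The plan is to apply Corollary \ref{coroaux2} with the same $x$ and then transfer the vanishing of $\liminf (p_{n+1}-p_n)/p_n^x$ to the vanishing of $\liminf (p_{n+1}^x - p_n^x)$ via the elementary inequality for the difference of $x$-th powers when $0\le x<1$. First I would recall that for $0\le x<1$ the function $t\mapsto t^x$ is concave and increasing on $(0,\infty)$, so by the mean value theorem (or concavity directly) for $0<a<b$ one has $b^x - a^x \le x a^{x-1}(b-a) \le a^{x-1}(b-a)$. Applying this with $a=p_n$ and $b=p_{n+1}$ gives
\begin{equation}
0 < p_{n+1}^x - p_n^x \le \frac{p_{n+1}-p_n}{p_n^{1-x}}.
\label{eq:powerbound}
\end{equation}

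Next I would invoke Corollary \ref{coroaux2}, which gives $\liminf_{n\to\infty}(p_{n+1}-p_n)/p_n^{x}=0$ for every $x>0$; but here I actually need the exponent $1-x$ in the denominator of \eqref{eq:powerbound}, so I would apply Corollary \ref{coroaux2} with the parameter $1-x$ in place of $x$. Since $0\le x<1$ we have $1-x\in(0,1]$, and for $1-x>0$ Corollary \ref{coroaux2} yields $\liminf_{n\to\infty}(p_{n+1}-p_n)/p_n^{1-x}=0$. The only borderline case is $x=0$, where the statement $p_{n+1}^0 - p_n^0 = 0$ is trivially true, so I can assume $0<x<1$ and hence $0<1-x<1$, which is squarely covered. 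Combining \eqref{eq:powerbound} with this, and using that $p_{n+1}^x-p_n^x$ is nonnegative, I get
\begin{equation}
0 \le \liminf_{n\to\infty}\bigl(p_{n+1}^{x}-p_{n}^{x}\bigr) \le \liminf_{n\to\infty}\frac{p_{n+1}-p_n}{p_n^{1-x}} = 0,
\label{eq:conclude}
\end{equation}
which forces the $\liminf$ to be $0$ and proves \eqref{eqmain}.

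I do not expect a genuine obstacle here; the proof is a short reduction. The one point requiring a little care is bookkeeping the exponent: the naive instinct is to use Corollary \ref{coroaux2} with exponent $x$, but the concavity bound \eqref{eq:powerbound} naturally produces $p_n^{1-x}$ in the denominator, so the corollary must be invoked at parameter $1-x$, and one should note that $1-x$ stays in the range $(0,1)$ precisely because $0<x<1$ (with $x=0$ handled trivially). An alternative, avoiding Corollary \ref{coroaux2} entirely, would be to go back to Corollary \ref{coroaux1} with $q_n=n$ and combine \eqref{eq:powerbound} with the Prime Number Theorem estimate $p_n\sim n\log n$ directly, observing that $(p_{n+1}-p_n)/p_n^{1-x} < 2\,(p_n/(n\log n))\cdot (n\log n)^x / n \to 0$ along a subsequence; either route closes the argument.
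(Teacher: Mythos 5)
Your proof is correct, and it takes a genuinely different route from the paper's. The paper proves Corollary \ref{alpha} by specializing Corollary \ref{coroaux} to $q_n=p_n$, which yields $p_{n+1}^{x}-p_{n}^{x}<(p_{n+1}p_n)^{x-1}+(p_{n+1}-p_{n})/p_{n}^{1-x}$ for infinitely many $n$, and then lets the first term tend to $0$ while invoking Corollary \ref{coroaux2} at exponent $1-x$ for the second. You bypass Corollary \ref{coroaux} entirely: the concavity (mean value theorem) bound $p_{n+1}^{x}-p_{n}^{x}\leq (p_{n+1}-p_{n})/p_{n}^{1-x}$ holds for \emph{every} $n$, after which the same appeal to Corollary \ref{coroaux2} at parameter $1-x$ finishes the argument. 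What your route buys is robustness: since your comparison inequality is valid for all $n$, the monotonicity of $\liminf$ applies directly, whereas the paper must combine an inequality that holds only for infinitely many $n$ with a $\liminf$ that is realized along a possibly disjoint subsequence --- a point the paper's proof passes over silently and which your argument simply does not have to confront. What the paper's route buys is uniformity of method: everything is squeezed out of Theorem \ref{main} and its corollaries with no calculus input, keeping the exposition inside the Kummer-test framework. Your bookkeeping of the exponent (invoking Corollary \ref{coroaux2} at $1-x\in(0,1)$ rather than at $x$, with $x=0$ handled trivially) is exactly right.
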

\begin{proof}
In Corollary \ref{coroaux} let $q_n=p_n$, for all $n\geq 1$.
Then the inequality
$$p_{n+1}^{x}-p_{n}^{x}<\frac{1}{(p_{n+1}p_n)^{1-x}}+ \frac{p_{n+1}-p_{n}}{p_{n}^{1-x}} , $$
holds for infinitely many values of $n$.  Since $0\leq x < 1$ we have that 
$$\lim_{n\to \infty}\frac{1}{(p_{n+1}p_n)^{1-x}}=0. $$
Also, from  Corollary \ref{coroaux2} note that
$$\liminf_{n\to \infty}\frac{p_{n+1}-p_{n}}{p_{n}^{1-x}}=0,$$
thus 
$$\liminf_{n\to \infty}p_{n+1}^{x}-p_{n}^{x}\leq \liminf_{n\to \infty}\frac{1}{(p_{n+1}p_n)^{1-x}}+ \frac{p_{n+1}-p_{n}}{p_{n}^{1-x}} =0. $$
The proof is concluded.
\end{proof}

\end{document}